\DeclarePairedDelimiter{\abs}{\lvert}{\rvert}
\newcommand{\suchthat}{\ifnum\currentgrouptype=16 \mathrel{}\middle|\mathrel{}\else\mid\fi}
\newtheorem{Cexample}[theorem]{Counterexample}
\begin{document}

\setlist[enumerate, 1]{label={\textnormal{(\alph*)}}, ref={(\alph*)}, leftmargin=0pt, itemindent=*}

\title{Some Remarks on the Location of Non-Asymptotic Zeros of Whittaker and Kummer Hypergeometric Functions}
\titlerunning{Location of Zeros of Whittaker and Kummer Hypergeometric Functions}

\author{Islam Boussaada \and Guilherme Mazanti \and Silviu-Iulian Niculescu}

\institute{I. Boussaada \at
             Universit\'e Paris-Saclay, CNRS, CentraleSup\'elec, Inria, Laboratoire des signaux et syst\`emes, 91190, Gif-sur-Yvette, France. \\
             IPSA, Ivry-sur-Seine, France\\
     \email{Islam.Boussaada@l2s.centralesupelec.fr} 
           \and
           G. Mazanti \at
             Universit\'e Paris-Saclay, CNRS, CentraleSup\'elec, Inria, Laboratoire des signaux et syst\`emes, 91190, Gif-sur-Yvette, France. \\
         \email{Guilherme.Mazanti@inria.fr} 
             \and 
             S-I. Niculescu
             \at
             Universit\'e Paris-Saclay, CNRS, CentraleSup\'elec, Inria, Laboratoire des signaux et syst\`emes, 91190, Gif-sur-Yvette, France. \\ \email{Silviu.Niculescu@l2s.centralesupelec.fr} 
}

\date{\mbox{}}

\maketitle

\begin{abstract}
This paper focuses on the location of the non-asymptotic zeros of Whittaker and Kummer confluent hypergeometric functions. Based on a technique by E.~Hille for the analysis of solutions of some second-order ordinary differential equations, we characterize the sign of the real part of zeros of Whittaker and Kummer functions and provide estimates on the regions of the complex plane where those zeros can be located. Our main result is a correction of a previous statement by G.~E.~Tsvetkov whose propagation has induced mistakes in the literature.
 In particular, we review some results of E.~B.~Saff and R.~S.~Varga on the error of Padé's rational approximation of the exponential function, which are based on the latter.
\keywords{Confluent hypergeometric functions \and Whittaker function \and Kummer function \and zeros location}
 \subclass{33C15 \and 34M03 \and 30A10}
\end{abstract}

\section{Introduction}
\label{sec:intro}

Confluent hypergeometric functions such as Kummer, Whittaker, Tricomi, or Coulomb (trigonometric) functions are solutions of a class of non-autonomous second-order differential equations which are said to be degenerate since two  of their regular singularities merge into an irregular singularity. In particular, the Kummer differential equation admits Kummer and Tricomi functions as solutions. However, Whittaker and Coulomb are solutions of different degenerate differential equations but can be expressed, for instance, in terms of Kummer degenerate hypergeometric functions. Notice also that such degenerate hypergeometric functions are closely connected to further special functions such as Bessel functions, and that, in particular, Laguerre and Hermite polynomials can be explicitly written in terms of Kummer hypergeometric functions. For further discussions on such topics, the reader is referred to \cite{Buchholz1969Confluent,Erdelyi1981Higher,Olver2010NIST,Tricomi50}.

In this note, we are interested in the location of non-asymptotic zeros of Whittaker and Kummer functions. These families of special functions have been extensively studied in the literature, with a wide range of results providing asymptotic properties of the distribution of their zeros (see, e.g., \cite{Buchholz1969Confluent}, \cite[Chapter~VI]{Erdelyi1981Higher}, and \cite[Chapter~13]{Olver2010NIST}). If the application of hypergeometric functions to the qualitative analysis of some classes of PDEs is well-known (see, for instance, \cite[Chapter~I, Section~4]{Buchholz1969Confluent} for the case of wave equations), it has been emphasized  in \cite{MBN-2021-JDE} that the distribution of zeros of such degenerate hypergeometric functions are closely related to the spectrum location of linear functional differential equations of retarded type. More precisely, it has been shown in \cite{MBN-2021-JDE} that an exponential polynomial characterizing a linear delay differential equation with a spectral value of maximal multiplicity \cite{BN-ACAP-2016} can be factorized in terms of a Kummer hypergeometric function with positive integers as indices.

In another context but equally  important,  the rational approximation theory  \cite{szego1924,dieudonne1935,Saff-Varga-1978} involve degenerate hypergeometric functions. As a matter of fact  the zeros of the partial sums  $S_n(z)=\sum_{k=0}^n z^k/k!$ of the Taylor expansion of the exponential function are intimately related to the   $(n,0)-$\emph{Pad\'e approximation} where, as reported in  \cite[p246]{Perron},  
 the general form of such a rational approximation for arbitrarily chosen non negative $m$ and $n$ :
\begin{equation}\label{PADE}\left\{
    \begin{aligned}
    R_{n,m}&=\frac{P_{n,m}(z)}{Q_{n,m}(z)}\\
     P_{n,m}&=\sum_{k=0}^n\frac{(n+m-k)!n!\,z^k}{(n+m)!k!(n-k)!}\\
     Q_{n,m}&=\sum_{k=0}^m\frac{(n+m-k)!m!\,(-z)^k}{(n+m)!k!(m-k)!}\\
    \end{aligned}\right.
\end{equation}
where  such a rational approximate of the exponential function satisfies
\begin{equation*}
 R_{n,m}(z)\xrightarrow[n,m\to\infty]{}e^{z}
\end{equation*}
uniformly on compact subsets of $\mathbb C$. Further, as can be found in \cite{Perron},  the Pad\'e remainder $e^z-R_{n,m}(z)$ satisfy the following relation :
\begin{equation}\label{PADEHypergeom}Q_{n,m}(z) \left(e^z-R_{n,m}(z)\right)=\frac{(-1)^{m}\,z^{m+n+1}}{(n+m)!}\int_0^1 e^{t\,z}(1-t)^n\,t^m\,dt
\end{equation}
which, as will be seen later,  is closely related a Kummer hypergeometric funcion, see for instance \cite{Saff-Varga-1978} and references therein. 

The contribution of this note is threefold: first, we provide a simple counterexample of a result proposed by G.~E.~Tsvetkov in \cite{Tsvetkov1}  on the location of non-asymptotic zeros of Whittaker functions. Second, we slightly correct Tsvetkov's result by using an appropriate integral transform (Green--Hille) introduced by E.~ Hille in his paper \cite{hille1922}, published almost one hundred years ago. Third, we rectify and complete propositions from the third part of the seminal series of papers \cite{Saff-Varga-1975,Saff-Varga-1977,Saff-Varga-1978} by E.~B.~Saff and R.~S.~Varga  on the rational approximation of the exponential function. We emphasize that Tsvetkov's mistake has no effect on the consistency of the remaining results of \cite{Saff-Varga-1978}. The paper is completed by an illustrative example showing the effectiveness of the derived results.

The remaining of the paper is organized as follows: Some prerequisites, preliminaries as well as Tsvetkov's original result are briefly presented in Section~\ref{sec:Whittaker}. The main result is stated in Section~\ref{sec:main}. Some concluding remarks in Section~\ref{sec:concluding} end the paper.

\section{Prerequisites and preliminaries}
\label{sec:Whittaker}

This section provides a brief presentation of the definitions and results that shall be of use in the sequel. We start by recalling the definition of Kummer confluent hypergeometric functions.

\begin{definition}
\label{DefiKummer}
Let $a, b \in \mathbb C$ and assume that $b$ is not a nonpositive integer. \emph{Kummer confluent hypergeometric function} $\Phi(a, b, \cdot): \mathbb C \to \mathbb C$ is the entire function defined for $z \in \mathbb C$ by the series
\begin{equation}
\label{DefiConfluent}
\Phi(a, b, z) = \sum_{k=0}^{\infty} \frac{(a)_k}{(b)_k} \frac{z^k}{k!},
\end{equation}
where, for $\alpha \in \mathbb C$ and $k \in \mathbb N$, $(\alpha)_k$ is the \emph{Pochhammer symbol} for the \emph{ascending factorial}, defined inductively as $(\alpha)_0 = 1$ and $(\alpha)_{k+1} = (\alpha+k) (\alpha)_k$ for $k \in \mathbb N$.
\end{definition}

\begin{remark}
Note that the series in \eqref{DefiConfluent} converges for every $z \in \mathbb C$. As presented in \cite{Buchholz1969Confluent, Erdelyi1981Higher, Olver2010NIST}, the function $\Phi(a, b, \cdot)$ satisfies \emph{Kummer differential equation}
\begin{equation}
\label{KummerODE}
z \frac{\partial^2 \Phi}{\partial z^2}(a, b, z) + (b - z) \frac{\partial \Phi}{\partial z}(a, b, z) - a \Phi(a, b, z) = 0.
\end{equation}
which has a regular singular point at $z = 0$ and an irregular singular point at $z =\infty$.
It is well known that \eqref{KummerODE} admits two linearly independent solutions, which sometimes are both called Kummer confluent hypergeometric functions. In the present paper, we are concerned only with the solution given by \eqref{DefiConfluent}.
\end{remark}

Notice that Kummer functions admit the integral representation
\begin{equation*}
\Phi(a, b, z) = \frac{\Gamma(b)}{\Gamma(a) \Gamma(b - a)} \int_0^1 e^{zt} t^{a-1} (1-t)^{b-a-1}\, dt
\end{equation*}
for every $a, b, z \in \mathbb C$ such that $\Re(b) > \Re(a) > 0$ (see, e.g., \cite{Buchholz1969Confluent, Erdelyi1981Higher, Olver2010NIST}), where $\Gamma$ denotes the Gamma function. This integral representation has been exploited to characterize the spectrum of some functional differential equations in \cite{MBN-2021-JDE}. Kummer confluent hypergeometric functions have close links with Whittaker functions, defined as follows (see, e.g., \cite{Olver2010NIST}).

\begin{definition}
Let $k, l \in \mathbb C$ and assume that $2l$ is not a negative integer. The \emph{Whittaker function} $\mathcal M_{k, l}$ is the function defined for $z \in \mathbb C$ by
\begin{equation}
\label{KummerWhittaker}
\mathcal{M}_{k,l}(z) = e^{-\frac{z}{2}} z^{\tfrac{1}{2} + l} \Phi(\tfrac{1}{2} + l - k, 1 + 2 l, z).
\end{equation}
\end{definition}

\begin{remark}
\label{RemkWhittaker}
If $\frac{1}{2} + l$ is not an integer, the function $\mathcal M_{k, l}$ is a multi-valued complex function with branch point at $z = 0$. Whenever $2l$ is not a negative integer, the nontrival roots of $\mathcal M_{k, l}$ coincide with those of $\Phi(\tfrac{1}{2} + l - k, 1 + 2 l, \cdot)$ and $\mathcal M_{k, l}$ satisfies \emph{Whittaker differential equation}
\begin{equation}
\label{Whittaker}
\varphi''(z) =\left(\frac{1}{4}-\frac{k}{z}+\frac{l^2-\frac{1}{4}}{z^2}\right)\varphi(z).
\end{equation}
Similarly to Kummer differential equation \eqref{KummerODE}, other solutions of Whittaker differential equation \eqref{Whittaker} are also known as Whittaker functions in other works, but they will not be used in this paper. Notice also that, since $\mathcal M_{k, l}$ is a nontrivial solution of the second-order linear differential equation \eqref{Whittaker}, any nontrivial root of $\mathcal M_{k, l}$ is necessarily simple.
\end{remark}

The roots of Whittaker functions satisfy following immediate symmetry property.

\begin{proposition}
\label{PropWhittakerSymmetricRoots}
Let $k, l \in \mathbb C$ and assume that $2l$ is not a negative integer. If $z \in \mathbb C \setminus \{0\}$ is a nontrivial root of $\mathcal M_{k, l}$, then $-z$ is a root of $\mathcal M_{-k, l}$.
\end{proposition}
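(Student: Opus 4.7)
The plan is to reduce the statement to the Kummer transformation $\Phi(a,b,z) = e^{z}\,\Phi(b-a,b,-z)$, which sends the roots of a Kummer function in the variable $z$ to roots of another Kummer function in the variable $-z$ with the first parameter replaced by $b-a$. Under the correspondence \eqref{KummerWhittaker}, $\mathcal M_{k,l}$ has first parameter $a = \tfrac12 + l - k$ and second parameter $b = 1+2l$, so $b-a = \tfrac12 + l + k$, which is precisely the first parameter associated with $\mathcal M_{-k,l}$. This is the algebraic reason the symmetry works.

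More concretely, I would first observe that, for $z \in \mathbb C \setminus \{0\}$ (on any chosen branch of $z^{1/2+l}$), the prefactor $e^{-z/2}\,z^{1/2+l}$ in \eqref{KummerWhittaker} is nonzero, so $\mathcal M_{k,l}(z) = 0$ is equivalent to $\Phi(\tfrac12 + l - k,\,1+2l,\,z) = 0$. Next, I would invoke Kummer's transformation
\begin{equation*}
\Phi(a,b,z) = e^{z}\,\Phi(b-a,b,-z),
\end{equation*}
which is easy to verify from the integral representation recalled in the excerpt: when $\Re(b) > \Re(a) > 0$, the substitution $t \mapsto 1-t$ in
\begin{equation*}
\Phi(a,b,z) = \frac{\Gamma(b)}{\Gamma(a)\Gamma(b-a)} \int_0^1 e^{zt}\,t^{a-1}(1-t)^{b-a-1}\,dt
\end{equation*}
yields $e^{z}\,\Phi(b-a,b,-z)$; the general case follows by analytic continuation in $(a,b)$, since both sides are entire in $a$ and meromorphic in $b$ with poles only at nonpositive integers.

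Applying this identity with $a = \tfrac12 + l - k$ and $b = 1+2l$, I would obtain
\begin{equation*}
\Phi(\tfrac12 + l - k,\,1+2l,\,z) = e^{z}\,\Phi(\tfrac12 + l + k,\,1+2l,\,-z),
\end{equation*}
so that vanishing of the left-hand side at $z$ forces vanishing of $\Phi(\tfrac12 + l + k,\,1+2l,\,\cdot)$ at $-z$. Multiplying by $e^{z/2}(-z)^{1/2+l}$ (which is nonzero for $z \neq 0$ on the chosen branch) then gives $\mathcal M_{-k,l}(-z) = 0$, completing the proof.

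There is essentially no obstacle: the only delicate point is justifying Kummer's transformation for all admissible parameters and taking care that the multi-valued factor $z^{1/2+l}$ in the definition of $\mathcal M_{k,l}$ never vanishes away from $z=0$, so that the zeros of $\mathcal M_{k,l}$ really do coincide with those of the corresponding Kummer function (as already noted in Remark~\ref{RemkWhittaker}).
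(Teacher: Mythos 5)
Your proposal is correct and takes essentially the same route as the paper's proof: both pass via \eqref{KummerWhittaker} to the zeros of $\Phi(\tfrac{1}{2}+l-k,\,1+2l,\,\cdot)$, apply the Kummer transformation $\Phi(a,b,z)=e^{z}\,\Phi(b-a,b,-z)$, and translate back to $\mathcal M_{-k,l}$. The only difference is that you sketch a derivation of the transformation from the integral representation plus analytic continuation, whereas the paper simply cites \cite[(13.2.39)]{Olver2010NIST}; this is a harmless (and correct) addition.
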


\begin{proof}
Let $z$ be as in the statement. By \eqref{KummerWhittaker}, $z$ is a root of $\Phi(\frac{1}{2} + l - k, 1 + 2 l, \cdot)$.
Notice also (see, e.g., \cite[(13.2.39)]{Olver2010NIST}), that, for every $a, b, z \in \mathbb C$ such that $b$ is not a nonpositive integer, we have $\Phi(a, b, z) = e^z \Phi(b-a, b, -z).$ Then $-z$ is a root of $\Phi(\frac{1}{2} + l + k, 1 + 2 l, \cdot)$, which implies, using once again \eqref{KummerWhittaker}, that $-z$ is a root of $\mathcal M_{-k, l}$.
\end{proof}

In the particular case of real indices $k$ and $l$, one finds early results on the distribution of complex roots of Whittaker functions $\mathcal{M}_{k,l}$ in \cite{Tsvetkov2, Tsvetkov1}. The next proposition provides the statement of Theorem~7 of \cite{Tsvetkov1}.

\begin{proposition}
\label{PropTsvetkoff}
Let $k,l\,\in \mathbb R$ be such that $2\,l+1 >0$.
\begin{enumerate}
\item\label{PropTsvetkoff-k-geq-0} If $k>0$, then all nontrivial roots $z$ of $\mathcal M_{k,l}$ satisfy $\Re(z) > 2\,k$.
\item\label{PropTsvetkoff-k-leq-0} If $k<0$, then all nontrivial roots $z$ of $\mathcal M_{k,l}$ satisfy $\Re(z) < 2\,k$.
\item If $k=0$, then all nontrivial roots $z$ of $\mathcal M_{k,l}$ are purely imaginary.
\end{enumerate}
\end{proposition}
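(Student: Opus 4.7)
The plan is to apply the Green--Hille integral transform to Whittaker's equation~\eqref{Whittaker}. Fix a nontrivial root $z_0 \in \mathbb C \setminus \{0\}$ of $\varphi := \mathcal{M}_{k,l}$, parametrize the segment from $0$ to $z_0$ by $z = t z_0$, $t \in (0, 1]$, and set $\psi(t) := \varphi(t z_0)$. Then $\psi(1) = 0$ and
\begin{equation*}
\psi''(t) = z_0^2 \biggl( \frac{1}{4} - \frac{k}{t z_0} + \frac{l^2 - \frac{1}{4}}{(t z_0)^2} \biggr) \psi(t).
\end{equation*}
By~\eqref{KummerWhittaker}, write $\psi(t) = t^{\frac{1}{2} + l}\, \chi(t)$ with $\chi(t) := z_0^{\frac{1}{2}+l}\, e^{-t z_0 / 2}\, \Phi\bigl(\tfrac{1}{2} + l - k,\, 1 + 2l,\, t z_0\bigr)$ analytic at $t = 0$; this factorization handles the branch-point behaviour.

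Multiplying the ODE for $\psi$ by $\overline{\psi(t)}$, integrating on $[\epsilon, 1]$ and integrating by parts gives
\begin{equation*}
-\psi'(\epsilon)\, \overline{\psi(\epsilon)} - \int_\epsilon^1 \lvert \psi'(t) \rvert^2 \, dt = \int_\epsilon^1 \biggl( \frac{z_0^2}{4} - \frac{k z_0}{t} + \frac{l^2 - \frac{1}{4}}{t^2} \biggr) \lvert \psi(t) \rvert^2 \, dt.
\end{equation*}
After substituting $\psi = t^{\frac{1}{2}+l} \chi$, the $\epsilon^{2l}$-divergences arising separately from $\int_\epsilon^1 \lvert \psi' \rvert^2\, dt$, from the $t^{-2}$ integral, and from the boundary term $\psi'(\epsilon) \overline{\psi(\epsilon)}$ share a common leading piece $(\tfrac{1}{2}+l)\,\epsilon^{2l}\, \lvert \chi(\epsilon) \rvert^2$ that cancels exactly, so letting $\epsilon \to 0^+$ I expect the clean identity
\begin{equation*}
A + \frac{z_0^2}{4}\, B = k\, z_0\, C,
\end{equation*}
where $A := \int_0^1 t^{1+2l} \lvert \chi'(t) \rvert^2\, dt > 0$, $B := \int_0^1 t^{1+2l} \lvert \chi(t) \rvert^2\, dt > 0$, and $C := \int_0^1 t^{2l} \lvert \chi(t) \rvert^2\, dt > 0$. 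The decisive inequality is $C - B = \int_0^1 t^{2l}(1 - t) \lvert \chi(t) \rvert^2\, dt > 0$.

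Writing $z_0 = x + i y$ with $x, y \in \mathbb R$, the imaginary part of the identity reads $y \bigl( \tfrac{B}{2} x - k C \bigr) = 0$. If $y \neq 0$, then $x = 2 k C / B$, and the strict inequality $C > B > 0$ gives $x > 2k$ when $k > 0$, $x < 2k$ when $k < 0$, and $x = 0$ when $k = 0$, settling each of the three items for non-real roots.

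The main obstacle is the case of real roots ($y = 0$): the imaginary-part equation is then vacuous, and the real part $A + z_0^2 B / 4 = k z_0 C$ yields only the upper bound $0 < z_0 < 4 k C / B$ when $k > 0$, not the sought lower bound $z_0 > 2 k$. Excluding real zeros in $(0, 2 k]$ (and symmetrically for $k < 0$) would require a separate ingredient---perhaps a Sturm-type oscillation comparison of~\eqref{Whittaker} with a constant-coefficient reference equation on that interval, or a direct analysis based on the Pochhammer expansion of $\Phi$---and this is presumably where the obstruction leading to the counterexample announced in the introduction appears.
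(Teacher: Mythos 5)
You cannot be faulted for failing to close the real-root case, because the statement is false there, and the paper contains no proof of Proposition~\ref{PropTsvetkoff} to compare against: it reproduces Tsvetkov's claim (for which no proof was ever published) precisely in order to refute it. Counterexample~\ref{ExplTsvetkovFalse} takes $k = l + \frac{3}{2}$, for which \eqref{KummerWhittaker} reduces the root set of $\mathcal M_{k,l}$ to that of $\Phi(-1, 1+2l, \cdot) = 1 - \frac{z}{1+2l}$, whose unique nontrivial root $z = 1 + 2l$ is \emph{real} and satisfies $\Re(z) = 1+2l < 2k = 3+2l$. This root lies exactly in the window $(0, 2k]$ that your imaginary-part equation leaves uncontrolled, and it is compatible with the upper bound $0 < z_0 < 4kC/B$ that your real-part equation does give. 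So the ``separate ingredient'' you hope for (a Sturm comparison or a direct series analysis excluding real zeros in $(0,2k]$) cannot exist; the obstruction you flagged in your last paragraph is not a gap in your technique but the actual failure of the proposition, which the paper repairs as Proposition~\ref{PropTsvetkoffCorrected} by weakening $\Re(z) > 2k$ to $\Re(z) > 0$ (under the stronger hypothesis $2l - 1 \geq 0$).

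That said, your computation itself is sound and worth comparing with the paper's proof of the corrected statement. Your energy identity $A + \frac{z_0^2}{4} B = k z_0 C$ is the scalar form of the Green--Hille transformation \eqref{GreenW} along the same segment $[0, z_2]$ used in the paper; I checked that the $(\frac{1}{2}+l)\epsilon^{2l}\abs{\chi(\epsilon)}^2$ boundary contributions cancel as you claim, that the $(l^2 - \frac{1}{4})\int_\epsilon^1 t^{2l-1}\abs{\chi}^2\,dt$ terms cancel identically between the two sides (so their individual divergence for $l \leq 0$ is harmless), and that $A > 0$ since $\chi(0) = z_0^{1/2+l} \neq 0$ while $\chi(1) = 0$. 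Your identity then delivers \emph{more} than you state: for real roots it forces $k x C = A + \frac{x^2}{4}B > 0$, hence $\operatorname{sign}(x) = \operatorname{sign}(k)$ and no real roots at all when $k = 0$; combined with $x = 2kC/B$ for non-real roots, this proves the full trichotomy of Proposition~\ref{PropTsvetkoffCorrected}, items (a)--(c), under only $2l+1 > 0$ rather than $2l - 1 \geq 0$ (thereby also covering the range $-\frac{1}{2} < l < \frac{1}{2}$ that the paper's Corollary~\ref{CorZerosKummer} leaves to Wynn's result). Moreover, your observation that $C - B = \int_0^1 t^{2l}(1-t)\abs{\chi(t)}^2\,dt > 0$ shows that Tsvetkov's bound $\abs{\Re(z)} > 2\abs{k}$ \emph{is} valid for non-real roots --- a sharper conclusion in that regime than the paper's $\Re(z) > 0$, and a clean way of seeing that Tsvetkov's error lies entirely with the real zeros.
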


Unfortunately, no proofs are provided in the short notes \cite{Tsvetkov2, Tsvetkov1}. In \cite{Tricomi50}, F.~G.~Tricomi states that ``\emph{...\ the important but (as far as I know) so far unchecked results of Mr.~Tsvetkov about the zeros of the Whittaker function can be proved, and they can also be represented graphically.}'' As a matter of fact, Tricomi presented in \cite{Tricomi50} an insightful graphical method which makes comprehensive the count and location of complex roots of both confluent hypergeometric solutions of Kummer equation, retrieving some results from \cite{Tsvetkov2, Tsvetkov1}.

Based on Proposition \ref{PropTsvetkoff} and Hille's method \cite{hille1922},  E.~B.~Saff and R.~S. Varga announce the following proposition \cite[Corollary 3.3]{Saff-Varga-1978}

\begin{proposition}
\label{CorSaffVarga}
If $l\geq 1/2$, then every nontrivial roots of $\mathcal M_{k, l}$ satisfies $\abs{z} > \sqrt{4\, l^2 - 1}$.
\end{proposition}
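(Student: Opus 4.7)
The plan is to apply Hille's Green transformation \cite{hille1922} to the Whittaker equation \eqref{Whittaker} along the segment from $0$ to a nontrivial root $z_0$. Parametrising this segment by $z = t z_0$, $t \in [0, 1]$, and setting $u(t) := \mathcal{M}_{k,l}(t z_0)$, a direct computation shows that $u$ satisfies
\begin{equation*}
u''(t) = G(t)\, u(t), \qquad G(t) := \frac{z_0^2}{4} - \frac{k\, z_0}{t} + \frac{l^2 - \tfrac{1}{4}}{t^2},
\end{equation*}
on $(0, 1)$, with the two-sided boundary condition $u(0) = u(1) = 0$: the right endpoint vanishes by hypothesis and the left endpoint vanishes because $\mathcal{M}_{k,l}(z) \sim z^{1/2 + l}$ at $z = 0$ and $l \geq 1/2 > -1/2$.

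Multiplying by $\bar u$, integrating over $(0, 1)$ and performing one integration by parts then gives
\begin{equation*}
\int_0^1 \lvert u'(t) \rvert^2\, dt + \int_0^1 G(t)\, \lvert u(t) \rvert^2 \, dt = 0,
\end{equation*}
the boundary term $[u'\bar u]_0^1$ vanishing because $u'\bar u \sim t^{2l}$ near $0$ and $u(1) = 0$. Writing $z_0 = x + i y$, $A := \int_0^1 \lvert u'\rvert^2\, dt > 0$, and $I_j := \int_0^1 \lvert u\rvert^2 / t^j\, dt$ for $j \in \{0, 1, 2\}$, the real and imaginary parts separate as
\begin{equation*}
A + \tfrac{x^2 - y^2}{4}\, I_0 - k x\, I_1 + \bigl(l^2 - \tfrac{1}{4}\bigr)\, I_2 = 0, \qquad y \bigl(\tfrac{x}{2}\, I_0 - k\, I_1\bigr) = 0.
\end{equation*}

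When $z_0 \notin \mathbb{R}$, the imaginary relation forces $k I_1 = x I_0 / 2$; substituting $k x I_1 = x^2 I_0 / 2$ into the real relation and simplifying yields the clean identity $\lvert z_0 \rvert^2\, I_0 = 4 A + (4 l^2 - 1)\, I_2$. Since $A > 0$, $4l^2 - 1 \geq 0$ for $l \geq 1/2$, and $I_2 > I_0$ (because $1/t^2 > 1$ on $(0, 1)$), this at once gives the target bound $\lvert z_0 \rvert^2 > 4l^2 - 1$.

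The main obstacle is the case of \emph{real} roots ($y = 0$), in which the imaginary relation is vacuous and the real relation collapses to $k x\, I_1 = \tfrac{x^2}{4} I_0 + \bigl(l^2 - \tfrac{1}{4}\bigr)\, I_2 + A$. This yields only $k x > 0$, not the desired bound, and I believe the argument cannot be pushed through as stated: for $(k, l) = (4, 3/2)$ the polynomial $\Phi(-2, 4, z) = 1 - z/2 + z^2/20$ gives a real root $z = 5 - \sqrt{5}$ of $\mathcal{M}_{4, 3/2}$ with $\lvert z \rvert^2 = 30 - 10 \sqrt{5} < 8 = 4l^2 - 1$, violating Proposition~\ref{CorSaffVarga} as written. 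The correct statement should therefore either restrict to non-real roots (which is automatic in the Padé setting of \cite{Saff-Varga-1978}, since $e^z - S_n(z)$ has no nonzero real zero) or replace the bound on real roots by a weaker one.
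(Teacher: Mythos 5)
You are right, and your diagnosis is precisely the point of the paper: Proposition~\ref{CorSaffVarga} is \emph{false} as stated, so the paper contains no proof of it. Instead, it refutes it in Counterexample~\ref{ExplFalseSaffVarga}, taking $k = l + \frac{5}{2}$ so that the nontrivial roots of $\mathcal M_{k,l}$ are the roots $z_\pm = 2 + 2l \pm \sqrt{2+2l}$ of $\Phi(-2, 1+2l, \cdot)$, with $z_- \leq \sqrt{4l^2-1}$ as soon as $l \geq \frac{5+3\sqrt{33}}{16}$; your instance $(k,l) = \left(4, \frac{3}{2}\right)$, with real root $z = 5 - \sqrt{5}$ and $\abs{z}^2 = 30 - 10\sqrt{5} < 8$, is exactly the member $l = \frac{3}{2}$ of that family, and your computation is correct. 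Your positive result is likewise the paper's correction: the final claim of Proposition~\ref{PropTsvetkoffCorrected} asserts $\abs{z} > \sqrt{4l^2-1}$ for all \emph{non-real} roots, and it is proved there by the same Green--Hille device you employ — your substitution $u(t) = \mathcal M_{k,l}(t z_0)$ followed by integration by parts against $\bar u$ is the transformation \eqref{GreenW} specialized to the segment $[0, z_2]$. The two arguments part ways only at the very end: the paper uses solely the imaginary-part identity \eqref{GreenWSI}, noting that $t \mapsto \Im\bigl(z_2\, G_{\mathcal M_{k,l}}(t z_2)\bigr)$ is decreasing, so negativity of the left-hand side forces $\Im\bigl(z_2\, G_{\mathcal M_{k,l}}(z_2)\bigr) < 0$ at $t = 1$, which is exactly $\abs{z_2}^2 > 4l^2 - 1$; you instead eliminate $k$ between the real and imaginary parts to reach the identity $\abs{z_0}^2 I_0 = 4A + (4l^2-1) I_2$ and conclude from $A > 0$ and $I_2 > I_0$. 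Both closings are valid; yours silently uses the convergence of $I_1, I_2$ near $t = 0$ (true, since $\abs{u}^2/t^2 \sim t^{2l-1}$ with $l \geq \frac{1}{2}$) and $A > 0$ (true, since $u \not\equiv 0$ would otherwise force $\Phi$ to vanish identically), so it would be worth stating these explicitly. Your closing remark about how the Padé applications survive also matches the paper: its remark in Section~\ref{sec:consequences} repairs \cite[Proposition~2.1]{Saff-Varga-1978} by showing that for $k = \frac{n-\nu}{2}$, $l = \frac{n+\nu+1}{2}$ the function $\mathcal M_{k,l}$ has no nontrivial real roots, so the bound applies to all of its nontrivial zeros.
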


\section{Main results}
\label{sec:main}

We present in this section the main results of our paper. Section~\ref{sec:counterexamples} presents counterexamples to Propositions~\ref{PropTsvetkoff} and \ref{CorSaffVarga}. We then show, in Section~\ref{sec:correction}, how these results can be corrected, and we explore some of their consequences in Section~\ref{sec:consequences}.

\subsection{Counterexamples to Propositions~\ref{PropTsvetkoff} and \ref{CorSaffVarga}}
\label{sec:counterexamples}

We start with a counterexample to Proposition~\ref{PropTsvetkoff} for some particular values of $(k, l)$.

\begin{Cexample}
\label{ExplTsvetkovFalse}
Let $l \in \mathbb R$ be such that $2 l + 1 > 0$ and take $k = l + \frac{3}{2}$. If $z$ is a nontrivial root of $\mathcal M_{k, l}$, then, by \eqref{KummerWhittaker}, $z$ is a nontrivial root of $\Phi(-1, 1 + 2 l, \cdot)$. From \eqref{DefiConfluent}, we have $\Phi(-1, 1 + 2 l, z) = 1 - \frac{z}{1 + 2 l}$, and its unique root is $z = 1 + 2l$. In particular, $\Re(z) = 1 + 2 l < 2k = 3 + 2l$, and thus Proposition~\ref{PropTsvetkoff}\ref{PropTsvetkoff-k-geq-0} is not verified.
\end{Cexample}

Note that, as a consequence of Proposition~\ref{PropWhittakerSymmetricRoots}, Counterexample~\ref{ExplTsvetkovFalse} also provides a counterexample to Proposition~\ref{PropTsvetkoff}\ref{PropTsvetkoff-k-leq-0}. We also point out that a counterexample to another result of \cite{Tsvetkov1}, its Theorem~11, was given in \cite{Saff-Varga-1978}.

Let us consider the root $z = 1 + 2 l$ of $\mathcal M_{k, l}$ for $k = l + \frac{3}{2}$ from Example~\ref{ExplTsvetkovFalse}. Since this root is real and simple (as recalled in Remark~\ref{RemkWhittaker}), for every $l > -\frac{1}{2}$, there exists an interval $I_l$ containing $1 + 2 l$ and a curve $k \mapsto z_l(k) \in \mathbb R$ defined on $I_l$ such that, for every $k \in I_l$, $z_l(k)$ is a real root of $\mathcal M_{k, l}$ with $z_l(l + \frac{3}{2}) = 1 + 2l$. These curves were computed numerically\footnote{Numerical computations were done in Python using the function \texttt{root\_scalar} from the module \texttt{sci\-py.\allowbreak{}op\-ti\-mize} from \cite{2020SciPy-NMeth}.} for $l \in \left\{-\frac{1}{4}, 0, \frac{1}{4}, \frac{1}{2}, \frac{3}{4}, 1\right\}$ and are represented in the $(k, z)$ plane in Figure~\ref{FigRootWhittaker}. The black dots $(k, z)$ correspond to $k = l + \frac{3}{2}$ and the root $z = 1 + 2 l$ from Example~\ref{ExplTsvetkovFalse}.

\begin{figure}[ht]
\centering
\resizebox{0.5\textwidth}{!}{\input{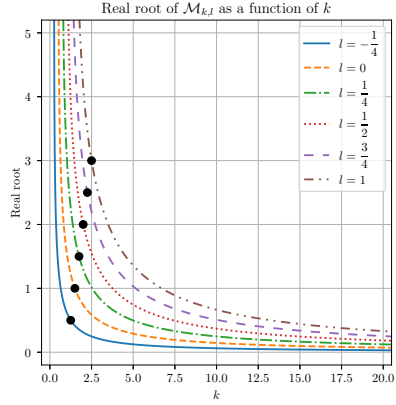}}
\caption{Real root $z(k)$ of $\mathcal M_{k, l}$ satisfying $z(l + \frac{3}{2}) = 1 + 2 l$ for six different values of $l$.}
\label{FigRootWhittaker}
\end{figure}

An inspection of Figure~\ref{FigRootWhittaker} leads to the conjecture that the maximal interval $I_l$ on which $z_l$ is defined is $I_l = (l + \frac{1}{2}, +\infty)$ and that $z_l(k) \to 0$ as $k \to +\infty$ and $z_l(k) \to +\infty$ as $k \to l + \frac{1}{2}$. In particular, if this conjecture is true, then one cannot expect to correct Proposition~\ref{PropTsvetkoff}\ref{PropTsvetkoff-k-geq-0} by replacing the term $2k$ by any function of $k$ which remains lower bounded as $k \to +\infty$.

Let us now present our counterexample to Proposition~\ref{CorSaffVarga}.

\begin{Cexample}
\label{ExplFalseSaffVarga}
Let $l \in \mathbb R$ be such that $2 l + 1 > 0$ and take $k = l + \frac{5}{2}$. If $z$ is a nontrivial root of $\mathcal M_{k, l}$, then, by \eqref{KummerWhittaker}, $z$ is a nontrivial root of $\Phi(-2, 1 + 2 l, \cdot)$. From \eqref{DefiConfluent}, we have $\Phi(-2, 1 + 2 l, z) = 1 - \frac{2 z}{1 + 2 l} + \frac{z^2}{(1 + 2l)(2 + 2l)}$, whose roots are the positive real numbers $z_+ = 2 + 2l + \sqrt{2 + 2l}$ and $z_- = 2 + 2l - \sqrt{2 + 2l}$. Standard computations show that $z_- = \abs{z_-} \leq \sqrt{4 l^2 - 1}$ if and only if $l \geq \frac{5 + 3 \sqrt{33}}{16}$. In particular, the bound $\abs{z} > \sqrt{4 l^2 - 1}$ is not satisfied for a nontrivial real root of $\mathcal M_{l + \frac{5}{2}, l}$ as soon as $l \geq \frac{5 + 3 \sqrt{33}}{16}$.
\end{Cexample}

\subsection{Statement and proof of the main result}
\label{sec:correction}

Out statement below provides a correction of Propositions~\ref{PropTsvetkoff} and \ref{CorSaffVarga}.

\begin{proposition}
\label{PropTsvetkoffCorrected}
Let $k,l\,\in \mathbb R$ be such that $2\,l-1 \geq 0$.
\begin{enumerate}
\item\label{PropTsvetkoffCorrected-Imaginary} If $k=0$, then all nontrivial roots $z$ of $\mathcal M_{k,l}$  are purely imaginary.
\item\label{PropTsvetkoffCorrected-k-geq-0} If $k>0$, then all nontrivial roots $z$ of $\mathcal M_{k,l}$ satisfy $\Re(z) > 0$.
\item\label{PropTsvetkoffCorrected-k-leq-0} If $k<0$, then all nontrivial roots $z$ of $\mathcal M_{k,l}$ satisfy $\Re(z) < 0$.
\item\label{PropTsvetkoffCorrected-k-neq-0} If $k\neq0$, then all nontrivial roots $z$ of $\mathcal M_{k,l}$ satisfy
\begin{equation}
\label{eq:bound}
4 k^2 {\Im(z)}^2 - \left(4({l}^{2}-{k}^{2}) - 1\right) {\Re(z)}^{2} > 0.
\end{equation}
\end{enumerate}
Moreover, in all cases, all non-real roots $z$ of $\mathcal M_{k, l}$ satisfy $\abs{z} > \sqrt{4\,l^2-1}$.
\end{proposition}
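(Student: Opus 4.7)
The plan is to apply E.~Hille's Green--Hille integral transform \cite{hille1922} to the Whittaker equation \eqref{Whittaker} along the straight segment from $0$ to a nontrivial root $z_{0} = x + i y$ of $\mathcal M_{k,l}$. Parametrising the segment by $t \in (0, 1]$ and setting $w(t) := \mathcal M_{k, l}(t z_{0})$, equation \eqref{Whittaker} transforms into
\begin{equation*}
w''(t) = \left(\frac{z_{0}^{2}}{4} - \frac{k z_{0}}{t} + \frac{l^{2} - \tfrac{1}{4}}{t^{2}}\right) w(t),
\end{equation*}
with $w(1) = 0$ by hypothesis and, by \eqref{KummerWhittaker}, $w(t) \sim z_{0}^{l + 1/2}\, t^{l + 1/2}$ as $t \to 0^{+}$. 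The assumption $2 l - 1 \geq 0$ is precisely what is needed to guarantee simultaneously the vanishing of boundary terms at $t = 0$ and the absolute convergence of the four positive integrals
\begin{equation*}
A := \int_{0}^{1}\! \abs{w'(t)}^{2}\, dt, \quad B := \int_{0}^{1}\! \abs{w(t)}^{2}\, dt, \quad C := \int_{0}^{1}\! \frac{\abs{w(t)}^{2}}{t}\, dt, \quad D := \int_{0}^{1}\! \frac{\abs{w(t)}^{2}}{t^{2}}\, dt.
\end{equation*}
Multiplying the ODE by $\overline{w}$, integrating over $[0,1]$ and integrating by parts once then produces the master identity
\begin{equation*}
- A \;=\; \frac{z_{0}^{2}}{4}\, B \;-\; k\, z_{0}\, C \;+\; \bigl(l^{2} - \tfrac{1}{4}\bigr)\, D.
\end{equation*}

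Separating real and imaginary parts yields two real scalar equations, the imaginary part collapsing to $y \bigl(\tfrac{x}{2} B - k C\bigr) = 0$. This immediately proves \ref{PropTsvetkoffCorrected-Imaginary}: if $k = 0$ and $y \neq 0$, then $x = 0$. For \ref{PropTsvetkoffCorrected-k-geq-0} and \ref{PropTsvetkoffCorrected-k-leq-0}, if $k \neq 0$ and $y \neq 0$, then $x = 2 k C / B$ has the sign of $k$ since $B, C > 0$; and if $y = 0$, the real part rearranges to
\begin{equation*}
k x C \;=\; A \;+\; \tfrac{x^{2}}{4} B \;+\; \bigl(l^{2} - \tfrac{1}{4}\bigr) D \;>\; 0,
\end{equation*}
so $x$ and $k$ again have the same sign.

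For \ref{PropTsvetkoffCorrected-k-neq-0} and the concluding $\abs{z} > \sqrt{4 l^{2} - 1}$ bound, I treat non-real and real roots separately. For non-real roots, substituting $xB = 2 k C$ into the real part cancels the cross term and collapses the master identity into
\begin{equation*}
\abs{z_{0}}^{2}\, B \;=\; 4 A \;+\; (4 l^{2} - 1)\, D,
\end{equation*}
which also holds trivially for $k = 0$ non-real roots (where $x = 0$ and $\abs{z_{0}} = \abs{y}$). Since $1/t^{2} \geq 1$ on $(0, 1]$ we have $D > B$ strictly, and combined with $A > 0$ this forces $\abs{z_{0}}^{2} > 4 l^{2} - 1$. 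The inequality \eqref{eq:bound} is equivalent to $4 k^{2} \abs{z_{0}}^{2} > (4 l^{2} - 1) x^{2}$; substituting $4 k^{2} = x^{2} B^{2} / C^{2}$ and applying Cauchy--Schwarz $C^{2} \leq B D$ to the collapsed identity yields $\abs{z_{0}}^{2} B^{2} > (4 l^{2} - 1) C^{2}$, which is equivalent. For real roots ($y = 0$), \eqref{eq:bound} reduces to $4 k^{2} > 4 l^{2} - 1$; squaring the relation $k x C = A + \tfrac{x^{2}}{4} B + (l^{2} - \tfrac{1}{4}) D$, applying $(p + q)^{2} \geq 4 p q$ with $p = \tfrac{x^{2} B}{4}$ and $q = A + (l^{2} - \tfrac{1}{4}) D$, dividing by $x^{2}$ and invoking $B D \geq C^{2}$ again delivers $(k^{2} - l^{2} + \tfrac{1}{4}) C^{2} \geq A B > 0$, as required.

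The main delicate point, and arguably the root cause of Tsvetkov's error, is the regularity analysis at $t = 0$: the integrability of $A$, $C$, $D$ and the vanishing of $[\overline{w}\, w']_{0}^{1}$ both fail in general under the weaker hypothesis $2l + 1 > 0$, and this is exactly what dictates the strengthening to $2l - 1 \geq 0$ in the statement.
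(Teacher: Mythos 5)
Your proof is correct, and at its core it is the same Hille technique as the paper's: your ``master identity'' $-A = \tfrac{z_0^2}{4}B - k z_0 C + \bigl(l^2-\tfrac14\bigr)D$ is exactly the Green--Hille transformation \eqref{GreenW} specialized to the segment $[0,z_0]$ and multiplied through by $-z_0$ (using $\overline{z_0}/\abs{z_0}^2 = 1/z_0$), so its real and imaginary parts carry the same information as \eqref{GreenWSR}--\eqref{GreenWSI}. Downstream, however, you genuinely diverge, in three ways. First, for items \ref{PropTsvetkoffCorrected-k-geq-0}--\ref{PropTsvetkoffCorrected-k-leq-0}, your imaginary-part relation $x = 2kC/B$ (for $y\neq 0$) plus the real-root identity $kxC = A + \tfrac{x^2}{4}B + \bigl(l^2-\tfrac14\bigr)D > 0$ handles both signs of $k$ symmetrically, whereas the paper treats $k>0$ and transfers to $k<0$ via Proposition~\ref{PropWhittakerSymmetricRoots}. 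Second, for \eqref{eq:bound} the paper observes that the numerator of \eqref{RealPart} is a concave quadratic in $t$ that must be positive somewhere, hence has positive discriminant; you instead derive the equivalent inequality $4k^2\abs{z_0}^2 > (4l^2-1)\Re(z_0)^2$ from the collapsed identity $\abs{z_0}^2 B = 4A + (4l^2-1)D$ together with Cauchy--Schwarz $C^2 \leq BD$, and for real roots via $(p+q)^2 \geq 4pq$ --- a nice byproduct being that your argument makes explicit that \eqref{eq:bound} holds for real roots in the form $4k^2 > 4l^2-1$. Third, for the modulus bound the paper uses monotonicity in $t$ of \eqref{ImaginaryPart} plus conjugate symmetry, while you use $D > B$ in the collapsed identity; both are one-line arguments, and yours covers $k=0$ without a separate case.

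Two small repairs are needed. In item \ref{PropTsvetkoffCorrected-Imaginary} you only treat $y \neq 0$; a purely-imaginary conclusion also requires excluding nonzero \emph{real} roots when $k=0$. Your own displayed identity does this instantly --- for $k=0$ and $y=0$ it reads $0 = A + \tfrac{x^2}{4}B + \bigl(l^2-\tfrac14\bigr)D > 0$ --- but you should say so, since as written that identity is invoked only under the heading $k\neq 0$. Second, your closing diagnosis is inaccurate: the integrals $A$, $C$, $D$ converge and the boundary term $[\,\overline{w}\,w'\,]$ vanishes at $t=0$ already for $l>0$ (the integrands behave like $t^{2l-1}$ and the boundary term like $t^{2l}$), so integrability fails under $2l+1>0$ only for $-\tfrac12 < l \leq 0$. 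What actually forces the strengthening to $2l-1\geq 0$ throughout $0<l<\tfrac12$ is the sign structure: every one of your conclusions uses $l^2-\tfrac14 \geq 0$ (equivalently $4l^2-1\geq 0$) as the coefficient of $D$, and with $l^2-\tfrac14<0$ the sign arguments in items \ref{PropTsvetkoffCorrected-Imaginary}--\ref{PropTsvetkoffCorrected-k-leq-0} and both final bounds break down even though the master identity itself remains valid.
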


\begin{remark}
If $l, k \in \mathbb R$ are such that $4(l^2-k^2) < 1$, the inequality in item \ref{PropTsvetkoffCorrected-k-neq-0} of Proposition~\ref{PropTsvetkoffCorrected} is satisfied for every $z \in \mathbb C \setminus \{0\}$. That item only provides nontrivial information on roots $z$ of $\mathcal M_{k, l}$ when $4(l^2-k^2) \geq 1$.
\end{remark}

In his note \cite{Tsvetkov2}, G.~E.~Tsvetkov presents further developments of the results of \cite{Tsvetkov1} and, among the scarce indications on the way to obtain such powerful results, the reader is referred to techniques by E.~Hille \cite{hille1922}, which have also been explored in \cite{Saff-Varga-1978} to obtain additional properties of roots of Whittaker functions. We use those techniques here to prove Proposition~\ref{PropTsvetkoffCorrected} and, before turning to its proof, let us comment on Hille's approach.

In \cite{hille1922}, Hille studies the distribution of zeros of functions of a complex variable satisfying linear second-order homogeneous differential equations with variable coefficients, as is the case for the degenerate Whittaker function $\mathcal{M}_{k,l}$, which satisfies \eqref{Whittaker}. Thanks to an integral transformation, ensuing from the differential equation, defined there and called \emph{Green--Hille transformation}, and some further conditions on the behavior of the function, Hille shows how to discard regions in the complex plane from including complex roots.

Consider the general homogeneous second-order differential equation
\begin{equation}
\label{eq:Hille}
    \frac{d}{dz}\left[K(z)\,\frac{d\,\varphi}{dz}(z)\right]+G(z)\varphi(z)=0
\end{equation}
where $z$ is the complex variable and the functions $G$ and $K$ are assumed analytic in some region $\Omega$ such that $K$ does not vanish in that region. Equation~\eqref{eq:Hille} can be written in $\Omega$ as a second-order system on the unknown functions $\varphi_1(z)=\varphi(z)$ and $\varphi_2(z)=K(z)\,\frac{d\,\varphi}{dz}(z)$, and the Green--Hille transformation consists on multiplying the equation on $\varphi_1$ by $\overline{\varphi_2(z)}$, that on $\varphi_2$ by $\overline{\varphi_1(z)}$, and integrating on $z$ along a path in $\Omega$, which yields
\begin{equation}\label{Green}
 \left[\overline \varphi_1(z)\,\varphi_2(z)\right]_{z_1}^{z_2}-\int_{z_1}^{z_2}\abs{\varphi_2(z)}^2\frac{\overline{dz}}{\overline{K}(z)}+\int_{z_1}^{z_2}\abs{\varphi_1(z)}^2 G(z)\,dz=0,
\end{equation}
where $z_1, z_2 \in \Omega$ and both integrals are taken along the same arbitrary smooth path in $\Omega$ connecting $z_1$ to $z_2$. With this preliminary exposition of Hille's approach, we are in position to provide the proof of Proposition~\ref{PropTsvetkoffCorrected}.

\begin{proof}[Proof of Proposition~\ref{PropTsvetkoffCorrected}]
The  Green--Hille transform \eqref{Green} corresponding to
 the Whittaker equation \eqref{Whittaker} is
\begin{equation}\label{GreenW}
 \left[\overline {\varphi_1}(z)\,\varphi_2(z)\right]_{z_1}^{z_2}-\int_{z_1}^{z_2}\abs{\varphi_2(z)}^2\overline{dz}+\int_{z_1}^{z_2}\abs{\varphi_1(z)}^2 G_{\mathcal{M}_{k,l}}(z)\,dz=0
\end{equation}
with 
\begin{equation}
G_{\mathcal{M}_{k,l}}(z)=-\left(\frac{1}{4}-\frac{k}{z}+\frac{l^2-\frac{1}{4}}{z^2}\right)
\end{equation}
and $(\varphi_1(z),\varphi_2(z))=({\mathcal{M}_{k,l}}(z),\,{\mathcal{M}'_{k,l}}(z))$.
As emphasized in \cite{hille1922}, one can exploit \eqref{GreenW} by choosing an appropriate integration path. Take $z_1 = 0$ and let $z_2 = x + i \omega \in \mathbb C \setminus \{0\}$ be a zero of $\mathcal{M}_{k,l}$; note that, since $2l - 1 \geq 0$, we have $\frac{1}{2} + l > 0$ and thus $z_1=0$ is also a zero of $\mathcal{M}_{k,l}$. We take as integration path in \eqref{GreenW} the segment $[z_1, z_2]$, parametrized by the function $t \mapsto t\,z_2$, where $t$ varies in the interval $[0,1]$, and we consider separately the real and the imaginary  parts of the obtained Green--Hille transform, which read
\begin{align}
\int_{0}^{1}x\,\abs{\varphi_2(t\,z_2)}^2\,dt&=\int_{0}^{1}\abs{\varphi_1(t\,z_2)}^2 \,\Re(z_2\,G_{\mathcal{M}_{k,l}}(t\,z_2))\,dt,\label{GreenWSR}\\
\int_{0}^{1}-\omega\,\abs{\varphi_2(t\,z_2)}^2\,dt&=\int_{0}^{1}\abs{\varphi_1(t\,z_2)}^2 \,\Im(z_2\,G_{\mathcal{M}_{k,l}}(t\,z_2))\,dt,\label{GreenWSI}
\end{align}
where
\begin{align}
\Re(z_2\,G_{\mathcal{M}_{k,l}}(t\,z_2))&=\frac{-x \left( {\omega}^{2}+{x}^{2} \right) {t}^{2}+4\,kt \left( {\omega}^{
2}+{x}^{2} \right) - \left( 2\,l-1 \right)  \left( 2\,l+1 \right) x
}{4\, \left( {\omega}^{2}+{x}^{2} \right) {t}^{2}},\label{RealPart}\\
\Im(z_2\,G_{\mathcal{M}_{k,l}}(t\,z_2))&=-{\frac {\omega}{4}}+{\frac {\left( 4\,{l}^{2}-1 \right) \omega}{ 4\,\left( {\omega}^{2}+{x}^{2} \right) {t}^{2}}}.\label{ImaginaryPart}
\end{align}

Let us consider first the case $k=0$ and assume that $\Re(z_2)=x>0$ (respectively $x<0$), which implies that the left-hand side of \eqref{GreenWSR} is positive (respectively negative). But, by taking into account the sign of right-hand side of equation \eqref{RealPart}, one arrives at a contradiction in both cases, which proves the imaginary nature of the roots of $\mathcal M_{k,l}$ and concluding item a).

Let us now consider the case $k>0$ and assume that $x<0$. The integral of the left-hand side of \eqref{GreenWSR} is necessarily negative, which is inconsistent with the sign of the right-hand side since in that case one easily checks that $\Re(z_2\,G_{\mathcal{M}_{k,l}}(t\,z_2))>0$. This shows that, for $k>0$ and $l\geq1/2$, the roots of $\mathcal{M}_{k,l}(z)$ are located in right half-plane $\{z\in\mathbb{C} \suchthat \Re(z)\geq0\}$. If $x=0$, then necessarily $k=0$ since $\Re(z_2\,G_{\mathcal{M}_{k,l}}(t\,z_2))|_{z_2=i\omega}={k}/{t}$. One concludes that, if $k>0$ and $l\geq1/2$ then the roots of $\mathcal{M}_{k,l}(z)$ are located in the open right half-plane $\{z\in\mathbb{C} \suchthat \Re(z)>0\}$. Furthermore, for $t\in(0,1)$, the denominator of \eqref{RealPart} is strictly positive for all $(x,\,\omega)\neq(0,\,0)$ and the numerator is a second order polynomial in $t$ with a negative leading coefficient, so that the discriminant of that polynomial has to be positive, that is, $4\, \left(  \left( 4\,{k}^{2}-4\,{l}^{2}+1 \right) {x}^{2}+4\,{k}^{2}{\omega}^{2} \right)  \left( {\omega}^{2}+{x}^{2} \right)>0$, which is equivalent to
\begin{equation*}
4 k^2 \omega^{2} - \left( 4\,({l}^{2}-{k}^{2})-1 \right) {x}^{2} > 0.
\end{equation*}
This proves \ref{PropTsvetkoffCorrected-k-geq-0} as well as \ref{PropTsvetkoffCorrected-k-neq-0} for $k>0$. Items \ref{PropTsvetkoffCorrected-k-leq-0} and \ref{PropTsvetkoffCorrected-k-neq-0} follow immediately from Proposition~\ref{PropWhittakerSymmetricRoots}.

Finally, in the case $\omega > 0$, the left-hand side of \eqref{GreenWSI} is negative, implying that there exist $t \in [0, 1]$ such that $\Im(z_2 G_{\mathcal M_{k, l}}(t\, z_2)) < 0$. Since, by \eqref{ImaginaryPart}, $t \mapsto \Im(z_2 G_{\mathcal M_{k, l}}(t\, z_2))$ is a decreasing function of $t$, we have in particular that $\Im(z_2 G_{\mathcal M_{k, l}}(z_2)) < 0$, which yields $\abs{z_2}^2 > 4\, l^2 - 1$, as required. The case $\omega < 0$ follows immediately since all roots of $\mathcal M_{k, l}$ appear in complex conjugate pairs.
\end{proof}

\subsection{Some remarks and consequences of the main result}
\label{sec:consequences}

\begin{remark}
The fact that nontrivial roots $z$ of $\mathcal M_{k, l}$ satisfy \eqref{eq:bound} when $k \neq 0$ has already been stated and proved in \cite[Proposition~3.2]{Saff-Varga-1978}. The proof of this bound there does not rely on Tsvetkov's result, Proposition~\ref{PropTsvetkoff} above, nor on Proposition~\ref{CorSaffVarga}, but is carried out instead by applying Hille's techniques from \cite{hille1922}. Our proof of Proposition~\ref{PropTsvetkoffCorrected} above uses those same techniques, but explores a few additional properties in order to obtain the conclusions of items \ref{PropTsvetkoffCorrected-k-geq-0} and \ref{PropTsvetkoffCorrected-k-leq-0} from Proposition~\ref{PropTsvetkoffCorrected} and hence correct the results from Proposition~\ref{PropTsvetkoff}.
\end{remark}

\begin{remark}
It turns out that the inequality $\abs{z} > \sqrt{4\,l^2-1}$ on roots of $\mathcal M_{k, l}$ holds, as shown in Proposition~\ref{PropTsvetkoffCorrected}, for \emph{nonreal} roots of $\mathcal M_{k, l}$, but not for \emph{nontrivial} roots of $\mathcal M_{k, l}$, as stated in Proposition~\ref{CorSaffVarga}, and, indeed, the counterexample provided in Counterexample~\ref{ExplFalseSaffVarga} presents a \emph{real} root which does not satisfy that inequality. The proof provided in \cite[Corollary~3.3]{Saff-Varga-1978} for Proposition~\ref{CorSaffVarga} above is based on the inequality $\abs{\Re(z)} > 2\abs{k}$ which follows from Proposition~\ref{PropTsvetkoff}, and hence the issue in the statement of Proposition~\ref{CorSaffVarga} can be seen as a propagation of the issue in the statement of Proposition~\ref{PropTsvetkoff}.
\end{remark}

\begin{remark}
Proposition~2.1 in \cite{Saff-Varga-1978} makes use of Proposition~\ref{CorSaffVarga} in its proof. However, the conclusion of \cite[Proposition~2.1]{Saff-Varga-1978} is still correct. Indeed, in the proof of \cite[Proposition~2.1]{Saff-Varga-1978}, the authors apply Proposition~\ref{CorSaffVarga} to the Whittaker function $\mathcal M_{k, l}$ with $k = \frac{n - \nu}{2}$ and $l = \frac{n + \nu + 1}{2}$, where $n$ and $\nu$ are nonnegative integers with $n + \nu > 0$. In that case $\mathcal M_{k, l}$ does not admit nontrivial real roots. Indeed, if $n = \nu$, this is simply a consequence of Proposition~\ref{PropTsvetkoffCorrected}\ref{PropTsvetkoffCorrected-Imaginary} while, if $n \neq \nu$, we have $4(l^2 - k^2) - 1 = 2n + 2\nu + 4 n \nu > 0$, and thus, by Proposition~\ref{PropTsvetkoffCorrected}\ref{PropTsvetkoffCorrected-k-neq-0}, if $\mathcal M_{k, l}$ admitted a nontrivial real root $x$, we would have $- \left(4({l}^{2}-{k}^{2}) - 1\right) x^{2} > 0$, which is a contradiction. In particular, all nontrivial roots of $\mathcal M_{k, l}$ are non-real, and thus, by Proposition~\ref{PropTsvetkoffCorrected}, any such root $z$ satisfies the bound $\abs{z} > \sqrt{4\, l^2 - 1}$, as required.
\end{remark}

Thanks to the connection between Whittaker and Kummer degenerate hypergeometric functions expressed in \eqref{KummerWhittaker}, an immediate consequence of the above result on the location of zeros of Kummer functions is given in the following corollary.
\begin{corollary}
\label{CorZerosKummer}
Let $a,\,b\in \mathbb R$ be such that $b\geq 2$.
\begin{enumerate}
\item If $b = 2a$, then all nontrivial roots $z$ of $\Phi(a,b, \cdot)$ are purely imaginary. 
\item\label{CorZerosKummer-k-geq-0} If $b > 2a$, then all nontrivial roots $z$ of $\Phi(a,b, \cdot)$ satisfy $\Re(z) > 0$.
\item\label{CorZerosKummer-k-leq-0} If $b < 2a$, then all nontrivial roots $z$ of $\Phi(a,b, \cdot)$ satisfy $\Re(z) < 0$.
\item\label{CorZerosKummer-k-neq-0} If $b \neq 2a$, then all nontrivial roots $z$ of $\Phi(a,b, \cdot)$ satisfy $(b - 2a)^2 {\Im(z)}^2 - \left(4a(b-a) - 2b\right) {\Re(z)}^{2} > 0$.
\end{enumerate}
\end{corollary}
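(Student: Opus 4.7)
The plan is to reduce Corollary~\ref{CorZerosKummer} directly to Proposition~\ref{PropTsvetkoffCorrected} via the change of parameters coming from \eqref{KummerWhittaker}. Setting
\[
k = \frac{b - 2a}{2}, \qquad l = \frac{b - 1}{2},
\]
the identity $\frac{1}{2} + l - k = a$ and $1 + 2l = b$ hold, so that
\[
\mathcal{M}_{k,l}(z) = e^{-z/2}\, z^{1/2 + l}\, \Phi(a, b, z).
\]
Since $b \geq 2$, we have $l \geq \tfrac{1}{2}$, hence $\tfrac{1}{2} + l > 0$; therefore, for any $z \in \mathbb{C} \setminus \{0\}$, $z$ is a root of $\Phi(a, b, \cdot)$ if and only if $z$ is a nontrivial root of $\mathcal{M}_{k,l}$. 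Also, $2l - 1 = b - 2 \geq 0$, so the hypothesis of Proposition~\ref{PropTsvetkoffCorrected} is fulfilled.

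Next, I would translate the three sign conditions. The correspondences $b = 2a \iff k = 0$, $b > 2a \iff k > 0$, and $b < 2a \iff k < 0$ are immediate from $k = (b-2a)/2$. Items (a)--(c) of the corollary then follow verbatim from the corresponding items of Proposition~\ref{PropTsvetkoffCorrected}.

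For item (d), the only step requiring a short computation is to rewrite the inequality \eqref{eq:bound}. I would compute
\[
4 k^2 = (b - 2a)^2,
\]
and
\[
4(l^2 - k^2) - 1 = (b-1)^2 - (b-2a)^2 - 1 = 4a(b-a) - 2b,
\]
so that \eqref{eq:bound} becomes
\[
(b - 2a)^2 \Im(z)^2 - \bigl(4a(b-a) - 2b\bigr)\Re(z)^2 > 0,
\]
which is precisely the inequality in item \ref{CorZerosKummer-k-neq-0}.

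There is essentially no obstacle here; the proof is a routine translation of parameters, and the only thing to verify carefully is the algebraic identity $4(l^2 - k^2) - 1 = 4a(b - a) - 2b$ under the substitution, together with the observation that the prefactor $e^{-z/2} z^{1/2+l}$ in \eqref{KummerWhittaker} does not vanish on $\mathbb{C} \setminus \{0\}$, ensuring that nontrivial zeros transfer faithfully between $\mathcal{M}_{k,l}$ and $\Phi(a,b,\cdot)$.
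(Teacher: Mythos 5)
Your proof is correct and takes exactly the route the paper intends: the paper states Corollary~\ref{CorZerosKummer} as an ``immediate consequence'' of Proposition~\ref{PropTsvetkoffCorrected} via the relation \eqref{KummerWhittaker}, which is precisely your substitution $k = \frac{b-2a}{2}$, $l = \frac{b-1}{2}$ (so that $\frac{1}{2}+l-k = a$, $1+2l = b$, and $2l-1 = b-2 \geq 0$). Your explicit verification of the identity $4(l^2-k^2)-1 = 4a(b-a)-2b$ and of the nonvanishing of the prefactor $e^{-z/2}z^{1/2+l}$ on $\mathbb{C}\setminus\{0\}$ simply fills in the details the paper leaves implicit.
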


\begin{remark}
In the case $a \in \{\alpha, \alpha + 1\}$ and $b = 2\alpha + 1$ for some $\alpha > -\frac{1}{2}$, the conclusions of Corollary~\ref{CorZerosKummer}\ref{CorZerosKummer-k-geq-0} and \ref{CorZerosKummer-k-leq-0} were shown by P.~Wynn in \cite[Theorem~1]{Wynn1973Zeros}. The techniques used in that reference do not rely on Hille's approach, but use instead a continued fraction representation of a ratio of Kummer functions (see \cite[Theorem~B(ii)]{Wynn1973Zeros}). Note that Corollary~\ref{CorZerosKummer} does not cover all cases of Wynn's result, since the assumption $b \geq 2$ is equivalent to $\alpha \geq \frac{1}{2}$, whereas Wynn's result is obtained for all $\alpha > -\frac{1}{2}$.
\end{remark}

To conclude this section, let us provide a few illustrations of the inequality from Proposition~\ref{PropTsvetkoffCorrected}\ref{PropTsvetkoffCorrected-k-neq-0}. When $4(l^2 - k^2) \geq 1$, Proposition~\ref{PropTsvetkoffCorrected}\ref{PropTsvetkoffCorrected-k-neq-0} provides a nontrivial bound on the location of nonzero roots of $\mathcal M_{k, l}$ in the complex plane. Figure~\ref{FigWhittakerBound}(a) illustrates that bound for the function $\mathcal M_{\frac{1}{2}, 2}$, in which case the bound from Proposition~\ref{PropTsvetkoffCorrected}\ref{PropTsvetkoffCorrected-k-neq-0} reads $\Im(z)^2 > 14 \,\Re(z)^2$. The region of the complex plane where that bound is satisfied is represented in light violet in the figure, whereas the roots of $\mathcal M_{\frac{1}{2}, 2}$ are represented by blue circles. 
It is worth mentioning that only roots in the rectangle $\{z \in \mathbb C \suchthat \abs{\Re(z)} \leq 10,\, \abs{\Im(z)} \leq 80\}$ are represented\footnote{Such roots were computed numerically using Python's \texttt{cxroots} module \cite{cxroots}.}.

We also represent, in Figure~\ref{FigWhittakerBound}(b), the ratio $\left(\frac{\Re(z)}{\Im(z)}\right)^2$ for the first two roots (ordered according to their distance to the origin and considering only once each complex conjugate pair) of $\mathcal M_{k, l}$ when $l = 2$ and $k$ varies in the interval $\left(0, \frac{\sqrt{15}}{2}\right)$, on which the bound from Proposition~\ref{PropTsvetkoffCorrected}\ref{PropTsvetkoffCorrected-k-neq-0} is nontrivial. Proposition~\ref{PropTsvetkoffCorrected}\ref{PropTsvetkoffCorrected-k-neq-0} states in this case that $\left(\frac{\Re(z)}{\Im(z)}\right)^2 < \frac{4 k^2}{4 (l^2 - k^2) - 1}$ for any root $z$ of $\mathcal M_{k, l}$, and the region described by this inequality is represented in light violet color in the figure.

\begin{figure}[ht]
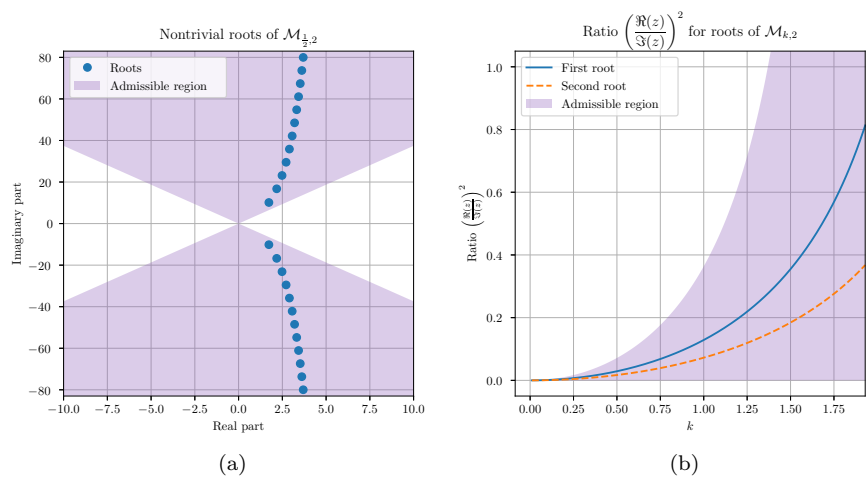

\centering
\begin{tabular}{@{} c @{} c @{}}
\resizebox{0.5\textwidth}{!}{\input{Figures/Whittaker_bound.pgf}} & \resizebox{0.5\textwidth}{!}{\input{Figures/Whittaker_bound_k.pgf}} \tabularnewline
(a) & (b) \tabularnewline
\end{tabular}
\caption{(a) Nontrivial roots of the Whittaker function $\mathcal M_{\frac{1}{2}, 2}$ (blue circles) and region of the complex plane (in light violet) where the bound from Proposition~\ref{PropTsvetkoffCorrected}\ref{PropTsvetkoffCorrected-k-neq-0} is satisfied. (b) Ratio $\left(\frac{\Re(z)}{\Im(z)}\right)^2$ for the first two roots of $\mathcal M_{k, 2}$ as a function of $k$ (solid blue and dashed orange lines) and region (in light violet) described by the bound from Proposition~\ref{PropTsvetkoffCorrected}\ref{PropTsvetkoffCorrected-k-neq-0}.}
\label{FigWhittakerBound}
\end{figure}

\section{Conclusion}
\label{sec:concluding}
In this paper, we have corrected an old but among the few result on the distribution of the non-asymptotic zeros of degenerate Kummer and Whittaker hypergeometric functions with real indices. The importance of this kind of result has been highlighted in recent works such as \cite{MBN-2021-JDE} concerned with the spectrum distribution of linear functional differential equations.

\bibliographystyle{spmpsci}   
\bibliography{Bib}
  
\end{document}